\documentclass[leqno,11pt, a4]{amsart}
\usepackage[utf8]{inputenc}
\usepackage{amsthm}
\usepackage{amsmath}
\usepackage{mathtools}
\usepackage{enumitem}
\usepackage{amsfonts}
\usepackage{amssymb}
\usepackage{graphicx}
\usepackage{pdfsync}
\usepackage{hyperref}
\usepackage{color}

\def\R{\text{$\mathbb{R}$}}
\def\C{\text{$\mathbb{C}$}}
\newcommand{\di}{\mathrm{d}}

\newcommand{\g}[1]{\mathfrak{#1}}
\newcommand{\Ad}{\mathrm{Ad}}
\newcommand{\ad}{\mathrm{ad}}
\newcommand{\dcp}[2]{\left\langle #1|#2\right\rangle}
\newcommand{\ssp}[2]{\left\langle #1,#2\right\rangle}
\newcommand{\nsp}[2]{\left\langle\!\left\langle #1,#2\right\rangle\!\right\rangle}
\newcommand{\Hess}{\text{H}}
\newcommand {\der}[2]{\frac{d#1}{d#2}}

\newcommand {\re}[1]{\mathfrak{Re}(#1)}

\newtheorem{thm}{Theorem}[section]
\newtheorem{prop}[thm]{Proposition}
\newtheorem{lemma}[thm]{Lemma}
\newtheorem{cor}[thm]{Corollary}

\newtheorem{mainthm}{Theorem}

\theoremstyle{definition}
\newtheorem{rem}[thm]{Remark}
\newtheorem{defini}[thm]{Definition}
\newtheorem{exe}[thm]{Example}

\title[]{A Calabi--Matsushima decomposition for real reductive group actions}
\author{L. Biliotti, A. Minuzzo \& F. Podestà}
\address{Dipartimento di Scienze Matematiche, Fisiche e Informatiche -
          Universit\`a degli Studi di Parma (Italy)}
\email{leonardo.biliotti@unipr.it}
\email{alessandro.minuzzo@unipr.it}
\address{Dipartimento di Matematica e Informatica "Ulisse Dini" - Università degli Studi di Firenze (Italy)}
\email{fabio.podesta@unifi.it}
\thanks{2000 {\em Mathematics Subject Classification: Primary 53C55, 57S20} \\
\textbf{Key words:} group action, gradient map, real compatible group, Cartan decomposition. }

\begin{document}

\begin{abstract}
Let $Z$ be a compact Kähler manifold endowed with a Hamiltonian action of a compact connected Lie group $U$, and let $G\subset U^{\mathbb C}$ be a real compatible subgroup. Using the gradient map $\mu_\g{p}$ associated with the momentum map, we establish a Calabi–Matsushima type decomposition for the Lie algebra of the stabilizer $G_z$ at a critical point of $f\circ\mu_{\mathfrak p}$, where $f$ is the restriction to $\mathfrak p$ of a suitable $\operatorname{Ad}_U$-invariant strictly convex function on $i\mathfrak u$. More precisely, we show that the isotropy algebra decomposes into eigenspaces corresponding to nonnegative eigenvalues of an adjoint endomorphism, with zero eigenspace given by its reductive part. This extends the classical decomposition for the complexified action of $U^{\mathbb C}$ to the real reductive setting. As an application, we prove that the identity component of the compact stabilizer at critical points of $f\circ\mu_\g{p}$ is a maximal compact subgroup of the identity component of the $G$-stabilizer.
\end{abstract}
\maketitle

\section{Introduction}
Let $(Z,J,\omega)$ be a compact connected Kähler manifold, where $J$ is the complex structure and $\omega$ the symplectic form, and let $U\subset \text{U}(n)$, $n\geq 1$, be a compact connected Lie subgroup of the unitary group, which acts by Kähler isometries on $Z$. This means that the action preserves $J,\ \omega$ and hence also the Riemannian metric $Q(\cdot,\cdot):=\omega(\cdot, J\cdot)$. The \textit{complexification} of $U$ is the \textit{reductive} group $U^\C=\{\exp(i\eta)u\ :\ u\in U,\   \eta\in\g{u}\}\subseteq \text{Gl}(n,\C)$. Since $Z$ is compact, the $U$-action induces a holomorphic action of $U^\C$. We further assume that the $U$-action is \emph{Hamiltonian}, meaning that there exists an $\Ad^*_U$-equivariant map $\mu:Z\to \g{u}^*$ such that 
$\dcp{(\di\mu)_z(v)}{\xi}={\omega_z(v,\xi_Z(z))}$ for all $z\in Z$, $v\in T_zZ$ and $\xi\in\g{u}$, where $\xi_Z(z):=\der{}{t}\exp(t\xi).z|_{t=0}$ is the \emph{infinitesimal generator of the action} and $\dcp{\cdot}{\cdot}:\g{u}^*\times\g{u}\to \R$ is the canonical pairing. Such a map $\mu$ is called a \emph{momentum map} for the $U$-action. 

In a paper by Lee, Sturm and Wang (see Theorem 1.1 of \cite{LSW25}), the authors show that given any smooth, strictly convex, $\Ad_U^*$-invariant function $f:\g{u}^*\to \R$, for all critical points $z\in Z$ of $f\circ \mu$, the subspace $\g{u}^\C_z:=\{\eta\in\g{u}^\C\ : \ \eta_Z(z)=0\}$ admits the following decomposition \begin{equation}\label{Eq1:CMdec}\g{u}^\C_z=\bigoplus_{\lambda\leq 0}\g{u}^\C_{\lambda}\quad \text{where}\quad \g{u}^\C_{\lambda}=\{\eta\in\g{u}^\C_z\ :\ \ad_{i (\di f)_{\mu(z)}}\eta=\lambda \eta\}\ .\end{equation}
In particular, $\g{u}^\C_{0}=\g{u}_z\oplus i\g{u}_z$ is the reductive part of $\g{u}^\C_z$ and $(\di f)_{\mu(z)}$ lies in the center of $\g{u}^\C_{0}$. The decomposition (\ref{Eq1:CMdec}) is referred to as the \emph{Calabi--(Lichnerowicz)--Matsushima decomposition}. As explained in \cite{GRS24}, this result is useful in proving important criteria such as the Székelyhidi Criterion (\cite{GRS24}, Theorem 13.4; see also the original article \cite{S07}). The interest in these results is motivated by their applications in an infinite-dimensional setting, where, for instance, one studies the K\"ahler manifold $\mathcal{J}$ of all integrable almost complex structures $J$ on $Z$ that are compatible with a given K\"ahler form $\omega_0$ (see \cite{Don97, Don017, Fu90}). In this case, the mapping that to each $J$ associates the scalar curvature of the induced Riemannian metric $g_J(\cdot,\cdot):=\omega_0(\cdot, J\cdot)$ plays the role of the momentum map for the action of the group of Hamiltonian diffeomorphisms of $(Z,\omega_0)$ on $\mathcal{J}$.  See \cite{LSW25} for an application of (\ref{Eq1:CMdec}) to these matters. Furthermore, the classical problem of determining \emph{extremal metrics} between the $g_J$ for the so-called \emph{Calabi functional} has its finite dimensional counterpart in the problem of determining critical points of the norm squared of the momentum map on $U^\C$-orbits in $Z$ (see \cite{S14} or \cite{Ca85}). The existence of such metrics is related to the notion of $K$-\emph{stability} (see again \cite{S14}) which, in turn, has its finite dimensional counterpart in the notion of \emph{stability} in \emph{differentiable Geometric Invariant Theory (dGIT)}, as it is discussed in \cite{GRS24}. 

From this perspective, it is possible to generalise the dGIT setting by studying instead of the action of $U^\C$, the action of a closed real subgroup $G\subseteq U^\C$. This subgroup must be compatible with the Cartan decomposition $U^\C=U\exp({i\g{u}})$, in the sense that there is a decomposition $G=K\exp(\g{p})$, with $K:=U\cap G$ compact subgroup and $\text{Lie}(G)=\g{g}=\g{k}\oplus \g{p}$, with $\text{Lie}(K)=\g{k}$ and $\g{p}:=i\g{u}\cap \g{g}$ being an $\Ad_K$-invariant subspace. The group $G$ is then called a \emph{real compatible} (or \emph{real reductive}) subgroup of $U^\C$. Thanks to the work of Heinzner, Schwarz et al., many of the fundamental results of classical dGIT were extended to this more general framework (see \cite{HS07,HS08,HS010}). Moreover, using the \emph{gradient map} (see Definition \ref{D:GradMap}), different classical notions of stability were generalised to the case of real reductive groups in \cite{BZ017,BW023,BW024}. 

Following this line of investigation, the aim of this note is to establish a
Calabi--Matsushima decomposition for real reductive subgroups
$G \subseteq U^{\mathbb C}$. We observe that this result does not follow formally from the complex case. A first essential difference is that, in general, $i\mathfrak{p}$ is not contained in $\mathfrak{k}$, and, similarly, $i\mathfrak{k}$ is not necessarily contained in $\mathfrak{p}$.\\

\noindent\textbf{Theorem A.} \textit{Let $G\subseteq U^\C$ be a real reductive subgroup with Cartan decomposition $G=K\exp(\g{p})$, $\g{g}=\g{k}\oplus \g{p}$.
 Let $\mu_\g{p}:Z\to \g{p}$ be the gradient map associated with an $\Ad_U^*$-equivariant momentum map $\mu:Z\to \g{u}^*$ (see Definition \ref{D:GradMap}), and let $f:\g{p}\to\R$ be an $\Ad_K$-invariant, strictly convex function. If $z\in Z$ is a critical point of $f\circ \mu_\g{p}:Z\to \R$, and if $f$ admits a strictly convex, $\Ad_U$-invariant extension to $i\g{u}$, which is $(\g{k},\g{p})$-compatible at $\mu_\g{p}(z)$ (see Definition \ref{D:kpComp}), there exists a direct sum decomposition of $\g{g}_z=\{\xi\in\g{g}\ :\ \xi_Z(z)=0\}$
 \begin{equation}\label{Eq:CMdec2}
     \g{g}_z=\bigoplus_{\lambda\geq 0}\g{g}_\lambda\ ,
 \end{equation}
where $\g{g}_\lambda:=\{\xi\in\g{g}_z\ :\ \ad_{(\nabla f)_{\mu_\g{p}(z)}}\xi=\lambda\xi\}$, $\g{g}_0=\g{k}_z\oplus \g{p}_z$, with $\g{k}_z=\g{g}_z\cap\g{k}$ and $\g{p}_z=\g{g}_z\cap \g{p}$.}\\


We observe that (\ref{Eq:CMdec2}) reduces to (\ref{Eq1:CMdec}) for the particular case where $G=U^\C$, the only difference being an opposite sign convention (see Remark \ref{R:ComplexCase} and Corollary \ref{C:CorollLSW25}). This sign difference is discussed in more detail in Remark \ref{R:SignOp}.

As we shall see later, an example of $\Ad_K$-invariant, strictly convex function on $\g{p}$, with a strictly convex, $(\g{k},\g{p})$-compatible extension is provided by the norm squared induced by the scalar product of Definition \ref{D:Scal1} below, restricted to $\g{p}$. This is not the only one: we provide different instances of non quadratic examples (see Example \ref{E:ExeF}). Interestingly, in the complex case $G=U^\C$, the $(\g{k,\g{p}})$-compatibility is trivially satisfied by any function on $i\g{u}$ (Remark \ref{R:ComplexCase}).

In the end, we will provide a non trivial application of (\ref{Eq:CMdec2}). More precisely, it is proved in \cite{GRS24} that the identity component of the stabilizer of a critical point $z\in Z$ of the momentum map $\mu:Z\to\g{u}^*$, which we denote by $U_z^o$, is equal to $(U^\C_z)^o\cap U$ and is a maximal compact subgroup of the identity component of the complex stabilizer $(U^\C_z)^o$. This result is in fact used in the proof of the above-mentioned Székelyhidi Criterion. We will see that an analogous statement holds in the case of a real reductive subgroup $G$ of $U^\C$ (see Theorem \ref{mainthm2}).
However, the proof of our more general result is substantially different from the one available in the complex case $G=U^{\mathbb C}$. Indeed, some of the structural properties of complex stabilizers used in the classical argument are no longer available for a general real reductive subgroup.
Nevertheless, the Calabi--Matsushima decomposition established above and a series of finer arguments allow us to derive the corresponding maximal compactness result in the real reductive setting. We discuss in Remark \ref{R:Salamon}, in further detail, why the proof for $G=U^{\mathbb C}$ does not directly extend to this more general situation.

\section{Preliminaries}

We start by defining a suitable inner product on $\g{u}^\C=\g{u}\otimes_\R\C=\g{u}\oplus i\g{u}$.

\begin{defini}\label{D:Scal1} Let $\ssp{\cdot}{\cdot}:\g{u}^\C\times\g{u}^\C\to \R$ be the $\R$-bilinear map defined by $$\ssp{X}{Y}:=\begin{cases}
-\re{B(X,Y)}\quad\ \text{if}\quad X,Y\in \g{u}\\
\ \ \ \re{B(X,Y)}\quad\text{if}\quad X,Y\in i\g{u}\\
\ \ \  \re{B(X,Y)} \ \ \ \text{if}\quad X\in \g{u}, Y\in i\g{u}\ ,
\end{cases}$$
where $\re{\cdot}$ denotes the real part and $B:\g{u}^\C\times\g{u}^\C\to \C$ is the $\C$-bilinear form given by
$$B(X,Y):=\text{Tr}(XY)\ .$$
\end{defini}

  Notice that $B(X,X)<0$ for all $X\neq 0\in\g{u}$ and $B(Y,Y)>0$ for all $Y\neq 0\in i\g{u}$, and that $B$ is manifestly $\Ad_{U}$-invariant because of the cyclicity of the trace. We say that such a bilinear form $B$ is of \textit{Euclidean type}. It follows that $\ssp{\cdot}{\cdot}$ is an Euclidean $\Ad_U$-invariant scalar product on $\g{u}^\C$. In particular, we have that the splitting $\g{u}^\C=\g{u}\oplus i\g{u}$ is orthogonal with respect to $\ssp{\cdot}{\cdot}$.

Let $G\subseteq U^\C$ be a real compatible subgroup with Cartan decomposition $G=K\exp(\g{p})$, with $K:=G\cap U$. At the Lie algebra level we have $\text{Lie}(G)=\g{g}=\g{k}\oplus \g{p}$, where $\g{k}=\text{Lie}(K)$ and $\g{p}:=\g{g}\cap i\g{u}$ is $\Ad_K$-invariant. Let $[\cdot,\cdot]:\g{u}\times\g{u}\to \g{u}$ denote the Lie bracket of $\g{u}$, which we extend $\C$-linearly to $\g{u}^\C$. If $\Ad:G\to \text{Aut}(\g{g})$, $g\mapsto \Ad_g$ is the adjoint representation, we have that $(\di (\Ad))_e:\g{g}\to \text{Der}(\g{g})$ and $(\di (\Ad))_e(\xi)(\eta)=[\xi,\eta]$. We will also use the notation $\ad_\xi\eta:=[\xi,\eta]$. It is well known that the following Cartan relations hold: \begin{equation}\label{Eq:CartanRel}[\g{k},\g{k}]\subseteq \g{k},\quad [\g{k},\g{p}]\subseteq \g{p},\quad [\g{p},\g{p}]\subseteq \g{k}\ .\end{equation}

Let $\#:(\g{u}^\C)^*\to \g{u}^\C$ be the duality induced by $\ssp{\cdot}{\cdot}$, and for $\alpha\in \g{u}^*$, we define $i\alpha\in(i\g{u})^*$ by $\dcp{i\alpha}{\xi}=\dcp{\alpha}{i\xi}$ for all $\xi\in i\g{u}$. In particular, $i\circ\#=-\#\circ i:\g{u}^*\to i\g{u}$. 

\begin{defini}\label{D:GradMap}
 Let $\mu:Z\to\g{u}^*$ be an $\Ad_U^*$-equivariant momentum map relative to the isometric holomorphic action of $U$ on $(Z,J,\omega)$. We define the \textit{gradient map} associated to $\mu$ as the map $\mu_\g{p}:Z\to \g{p}$ given by $$\mu_\g{p}(z):=\pi(\#i\mu(z))\ ,$$ 
 where $\pi:\g{u}^\C\to \g{p}$ is the orthogonal projection onto $\g{p}$ with respect to $\ssp{\cdot}{\cdot}$. 
\end{defini}

\begin{prop}\label{P:Prop1}The following elementary facts hold true.
  \begin{itemize}
      \item [(i)] The map $\mu_\g{p}$ is $\Ad_K$-equivariant, i.e. for all $k\in K$ and $z\in Z$ we have
$$\mu_\g{p}(k.z)=\Ad_k(\mu_\g{p}(z))\ ,$$
where by $k.z$ we denote the action of $k\in K$ on $z\in Z$;\\
\item[(ii)]If $\xi\in \g{p}$ and $\xi_Z(z):=\der{}{t}\exp(t\xi).z|_{t=0}$ is the infinitesimal generator, we have
$$\ssp{(\di \mu_\g{p})_z(\cdot)}{\xi}=Q_z(\cdot,\xi_Z(z))\ ,$$
that is equivalent to saying
$$(\nabla\mu_\g{p}^\xi)(z)=\xi_Z(z)\ ,$$
where we defined $\mu_\g{p}^\xi:=\ssp{\mu_\g{p}(z)}{\xi}$, and where $\nabla\mu_\g{p}^\xi$ is the Riemannian gradient of $\mu_\g{p}^\xi$ 
relative to the Riemannian metric $Q(\cdot,\cdot):=\omega(\cdot, J\cdot)$. 
  \end{itemize}  

\end{prop}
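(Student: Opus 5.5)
For (i), I will unwind the definition $\mu_\g{p}=\pi\circ\#\circ i\circ\mu$ and check that each of the three maps commutes with the relevant $K$-action.
\begin{itemize}
\item Since $\mu$ is $\Ad^*_U$-equivariant, $\mu(k.z)=\Ad^*_k\mu(z)$ for $k\in K\subseteq U$.
\item The operation $\alpha\mapsto i\alpha$ commutes with $\Ad^*_k$. Indeed, $\Ad_k$ is $\C$-linear on $\g{u}^\C$ and preserves $i\g{u}$, so $\dcp{i\Ad^*_k\alpha}{\xi}=\dcp{\Ad^*_k\alpha}{i\xi}$, which equals $\dcp{\alpha}{i\Ad_{k^{-1}}\xi}=\dcp{\Ad^*_k(i\alpha)}{\xi}$.
\item Because $\ssp{\cdot}{\cdot}$ is $\Ad_U$-invariant, the duality $\#$ intertwines $\Ad^*_k$ with $\Ad_k$.
\item $\g{p}$ is $\Ad_K$-invariant and $\Ad_k$ is orthogonal. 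Hence $\g{p}^\perp$ is also $\Ad_K$-invariant, and the orthogonal projection $\pi$ commutes with $\Ad_k$.
\end{itemize}
Composing these four facts gives $\mu_\g{p}(k.z)=\Ad_k\mu_\g{p}(z)$.

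For (ii), fix $\xi\in\g{p}$ and $v\in T_zZ$.
\begin{itemize}
\item Since $\xi\in\g{p}$, we have $\ssp{\pi(X)}{\xi}=\ssp{X}{\xi}$ for every $X$. Therefore $\ssp{(\di\mu_\g{p})_z v}{\xi}=\ssp{\#i(\di\mu)_z v}{\xi}=\dcp{i(\di\mu)_z v}{\xi}$. Here $\xi\in i\g{u}$, and $\#$ is the duality for $\ssp{\cdot}{\cdot}$; the splitting $\g{u}\oplus i\g{u}$ is orthogonal, so we may regard $\#$ on $(i\g{u})^*$.
\item By the definition of $i\alpha$, this equals $\dcp{(\di\mu)_z v}{i\xi}$. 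Note that $i\xi\in\g{u}$, with the convention $i\cdot i\xi=-\xi$ handled below.
\item By the momentum map identity this is $\omega_z(v,(i\xi)_Z(z))$.
\end{itemize}

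Next I use holomorphicity of the $U^\C$-action, which gives $(i\eta)_Z=J\eta_Z$ for $\eta\in\g{u}$. Writing $\xi=i\eta$ with $\eta=-i\xi\in\g{u}$, the vector field $\xi_Z$ equals $J\eta_Z$. Then
$$\omega(v,(i\xi)_Z)=\omega(v,-\eta_Z)=\omega(v,J\xi_Z)=Q(v,\xi_Z),$$
because $-\eta_Z=J(J\eta_Z)=J\xi_Z$. This is the claimed identity $\ssp{(\di\mu_\g{p})_z(\cdot)}{\xi}=Q_z(\cdot,\xi_Z(z))$. The gradient formulation follows at once from the definition of the Riemannian gradient, since $\di\mu_\g{p}^\xi=\ssp{\di\mu_\g{p}}{\xi}$.

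The only delicate point is the sign bookkeeping. This involves the convention $i\circ\#=-\#\circ i$, the definition of $i\alpha$ on $(i\g{u})^*$, and the sign convention $\xi_Z=\frac{d}{dt}\exp(t\xi).z$ versus $J$. I will check it carefully against the stated identity. If a sign mismatch appears, it must be absorbed by the fact that $\ssp{\cdot}{\cdot}$ restricted to $i\g{u}$ is $+\re{B}$ while on $\g{u}$ it is $-\re{B}$.
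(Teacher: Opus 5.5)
Your proof is correct and follows the paper's argument essentially step by step. For (i) you factor $\mu_{\g{p}}=\pi\circ\#\circ i\circ\mu$ and check that each factor is $K$-equivariant. For (ii) you use $\ssp{\pi(X)}{\xi}=\ssp{X}{\xi}$, the momentum map identity for $i\xi\in\g{u}$, and the $\C$-linearity of the infinitesimal action, $(i\xi)_Z=J\xi_Z$; you derive this via $\eta=-i\xi$, whereas the paper uses it directly. Your closing worry about signs is unnecessary: your chain $\omega(v,(i\xi)_Z)=\omega(v,J\xi_Z)=Q(v,\xi_Z)$ already closes with the correct sign, and the sign difference of $\ssp{\cdot}{\cdot}$ between $\g{u}$ and $i\g{u}$ never enters, since only its restriction to $i\g{u}$ is used.
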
 

\begin{proof}
$(i)$ By definition $\mu_\g{p}:=\pi\circ\#\circ i\circ \mu$. We have that $\mu\circ k.=\Ad_k^*\circ \mu$ by $\Ad^*_K$-equivariance of $\mu$. The multiplication by $i$ commutes with $\Ad^*_k$ and therefore we are left to show that $\pi\circ\#\circ\Ad^*_k=\Ad_k\circ\pi\circ\#$.

For $\#\circ\Ad_k^*=\Ad_k\circ\#$, we recall that, by definition, $\dcp{\Ad_k^*\cdot}{\cdot}=\dcp{\cdot}{\Ad_{k^{-1}}\cdot}$, and hence, for all $\xi\in\g{u}$, $$\ssp{\#(\Ad^*_{k}\mu)}{\xi}=\dcp{\Ad_k^*\mu}{\xi}=\dcp{\mu}{\Ad_{k^{-1}}\xi}=\ssp{\#\mu}{\Ad_{k^{-1}}\xi}=\ssp{\Ad_k(\#\mu)}{\xi}\ .$$
Finally, $\Ad_k$ commutes with $\pi$, because $\pi$ depends only on $\ssp{\cdot}{\cdot}$, which is $\Ad_U$-invariant, and $\g{p}$ is $\Ad_K$-invariant.

\noindent $(ii)$ By definition of $\mu_\g{p}$ and since $\xi\in \g{p}$, we have $$\ssp{\mu_{\g{p}}}{\xi}=\ssp{\pi(\#(i\mu))}{\xi}=\ssp{\#(i\mu)}{\xi}=\dcp{i\mu}{\xi}=\dcp{\mu}{i\xi}\ .$$

\noindent By noticing that $i\xi\in\g{u}$, since $\mu$ is a momentum map, we have that for all $z\in Z$ and $v\in T_z Z$ 
$$\ssp{(\di \mu_\g{p})_z(v)}{\xi}=\dcp{(\di \mu)_z(v)}{i\xi}=\omega_z(v,(i\xi)_Z(z))\ .$$
\noindent Now, since the action extends to a holomorphic action of $U^\C$ (because $Z$ is compact), we have that $(i\xi)_Z=J\xi_Z$ and therefore,
$$\ssp{(\di \mu_\g{p})_z(v)}{\xi}=\omega_z(v,J_z(\xi_Z(z)))=Q_z(v,\xi_Z(z))\ .$$

\end{proof}

\begin{prop}\label{P:Diffpz} Let $f:\g{p}\to \R$ be a smooth function and $\mu_\g{p}:Z\to \g{p}$ be the gradient map. Then $z\in Z$ is a critical point of $f\circ \mu_\g{p}$ if and only if $$\#(\di f)_{\mu_\g{p}(z)}\in \g{p}_z\ ,$$
where $\g{p}_z=\g{g}_z\cap\g{p}=\{\xi\in \g{p}\ :\ \xi_Z(z)=0\}$. 
\end{prop}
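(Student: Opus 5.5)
We compute the differential of $f\circ\mu_\g{p}$ at $z$ by the chain rule and then use Proposition \ref{P:Prop1}(ii). Set $\beta:=\#(\di f)_{\mu_\g{p}(z)}\in\g{p}$; here $\#$ is the duality on $\g{p}$ induced by the restriction of $\ssp{\cdot}{\cdot}$, so $\beta$ is the gradient $(\nabla f)_{\mu_\g{p}(z)}$. For every $v\in T_zZ$ we have
$$\di(f\circ\mu_\g{p})_z(v)=(\di f)_{\mu_\g{p}(z)}\big((\di\mu_\g{p})_z(v)\big)=\ssp{(\di\mu_\g{p})_z(v)}{\beta}.$$
Since $\beta\in\g{p}$, Proposition \ref{P:Prop1}(ii) applied with $\xi=\beta$ rewrites the right-hand side as $Q_z(v,\beta_Z(z))$.

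Hence $\di(f\circ\mu_\g{p})_z(v)=Q_z(v,\beta_Z(z))$ for all $v\in T_zZ$. Because $Q_z$ is a Riemannian metric, hence nondegenerate, this linear form vanishes identically if and only if $\beta_Z(z)=0$. So $z$ is a critical point of $f\circ\mu_\g{p}$ exactly when $\beta_Z(z)=0$, that is, when $\beta\in\g{g}_z$. Since $\beta$ already lies in $\g{p}$, this is equivalent to $\beta\in\g{g}_z\cap\g{p}=\g{p}_z$, which proves both implications.

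There is no real obstacle. The only point to check is that $\#$ sends $(\di f)_{\mu_\g{p}(z)}\in\g{p}^*$ into $\g{p}$, so that Proposition \ref{P:Prop1}(ii) applies. This holds because $\ssp{\cdot}{\cdot}$ restricts to a scalar product on $\g{p}$; equivalently, if $(\di f)_{\mu_\g{p}(z)}$ is extended by zero on $\g{p}^\perp$, its $\#$-image is automatically in $\g{p}$.
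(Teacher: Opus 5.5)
Your proposal is correct and follows the same route as the paper: chain rule, then Proposition \ref{P:Prop1}(ii) with $\xi=\#(\di f)_{\mu_\g{p}(z)}\in\g{p}$, then nondegeneracy of $Q_z$. You also make explicit the converse direction and the fact that $\#$ lands in $\g{p}$, both of which the paper's proof leaves implicit.
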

\begin{proof}
If $z\in Z$ is critical for $f\circ\mu_\g{p}$, we have that for all $v\in T_z Z$ $$0=(\di(f\circ\mu_\g{p}))_z(v)=\dcp{(\di f)_{\mu_\g{p}(z)}}{(\di\mu_\g{p})_z(v)}=\ssp{\#(\di f)_{\mu_\g{p}(z)}}{(\di\mu_\g{p})_z(v)}\ .$$
Since $\#(\di f)_{\mu_\g{p}(z)}\in \g{p}$, we can use item $(ii)$ of Proposition \ref{P:Prop1} to conclude that
$$\ssp{\#(\di f)_{\mu_\g{p}(z)}}{(\di\mu_\g{p})_z(v)}=Q_z(v,(\#(\di f)_{\mu_\g{p}(z)})_Z(z))=0\ ,$$
for all $v\in T_zZ$, which tells us precisely that $\#(\di f)_{\mu_\g{p}(z)}\in \g{p}_z$.

\end{proof}

\begin{rem}\label{R:Diffsspz} We will consider the case of $f:\g{p}\to \R$, given by $f(\xi):=\frac{1}{2}\ssp{\xi}{\xi}$. One easily recognize that $(\di f)_{\mu_\g{p}(z)}=\#^{-1}(\mu_\g{p}(z))$ and therefore, that $z\in Z$ is critical for $f\circ\mu_\g{p}$ if and only if $$\#(\di f)_{\mu_\g{p}(z)}=\mu_\g{p}(z)\in \g{p}_z\ .$$  

In the following, we shall adopt a simpler notation by writing, for a general $F:i\g{u}\to \R$, 
$$(\nabla F)_\xi:=\#(\di F)_\xi\ ,$$
which is to be interpreted as a vector field on $i\g{u}$ computed at $\xi\in i\g{u}$. 
\end{rem}

\section{Calabi--Matsushima decomposition}
In this section, we are going to prove the Calabi--Matsushima decomposition for real reductive groups (Theorem \ref{mainthm} below), which will be derived from two technical lemmas. We begin with a definition.

\begin{defini}\label{D:kpComp}
    Let $F:i\g{u}\to\R$ be a smooth function and let $(\Hess F)_\zeta$ denote the Hessian of $F$ at $\zeta\in i\g{u}$. We say that $F$ is $(\g{k},\g{p})$\emph{-compatible} at $\zeta$ if the $\ssp{\cdot}{\cdot}$-symmetric endomorphism $\#(\Hess F)_\zeta:=\#\circ (\Hess F)_\zeta:i\g{u}\to i\g{u}$ preserves $\g{p}$ and $i\g{k}$.
    $$\#(\Hess F)_\zeta(\g{p})\subseteq \g{p},\quad \#(\Hess F)_\zeta(i\g{k})\subseteq i\g{k}\ .$$

    \begin{rem}\label{R:tildeF} 
      If we let $\tilde F:=F\circ i:\g{u}\to \R$, then $i\circ \#(\Hess F)_\zeta\circ i= -\#(\Hess \tilde F)_{-i \zeta}$ is an endomorphism of $\g{u}$. It follows that $\#(\Hess F)_\zeta(i\g{k})\subseteq i\g{k}$ is equivalent to $\#(\Hess \tilde F)_{-i \zeta}(\g{k})\subseteq \g{k}$.
    \end{rem}
\end{defini}

\begin{rem}\label{R:ComplexCase}
   We note that if $G=U^\C$, hence $\g{k}=\g{u}$ and $\g{p}=i\g{u}$, then any smooth function $F:i\g{u}\to \R$ is $(\g{k},\g{p})$-compatible at every point of $i\g{u}$. 
\end{rem}

A trivial example of a strictly convex, $\Ad_U$-invariant, $(\g{k,\g{p}})$-compatible function on $i\g{u}$ is, of course, given by the norm induced by the scalar product of Definition \ref{D:Scal1} restricted to $i\g{u}$. A less trivial, non quadratic function of this kind is provided by the following concrete example.

\begin{exe}\label{E:ExeF} Consider $U=\text{U}(2)$, so that $U^\C=\text{Gl}(2,\C)$. The group $G=\text{Gl}^+(2,\R)\subset \text{Gl}(2,\C)$ is then a real compatible subgroup, with Cartan decomposition $G=\text{SO}(2)\text{Sym}^{+}(2,\R)$. In this case we have $\g{u}=\g{u}(2)=\text{Lie(\text{U}(2))}$, hence $\g{u}^\C=\g{gl}(2,\C)=\g{u}(2)\oplus i\g{u}(2)$ and $\g{g}=\g{gl}(2,\R)=\g{k}\oplus \g{p}=\g{so}(2)\oplus \text{Sym}(2,\R)$. It is straightforward to check that the function $F:i\g{u}\to \R$, defined by
    $$F(X):=\frac{1}{2}\ssp{X}{X}+\sum_{j=2}^k a_j\re{\text{Tr}(X)}^{2j}, \quad a_j\geq 0, k\in \mathbb{N}\ ,$$
    is convex, $\Ad_U$-invariant and $(\g{k},\g{p})$-compatible (at every $X\in i\g{u}$) in the sense of Definition \ref{D:kpComp}. Clearly, if $a_j>0$ for some $j$, then $F$ is a non-quadratic example.
    \end{exe}


    

    
    Now, we have all the ingredients to state the main theorem.

\begin{mainthm}\label{mainthm}
  Let $G\subseteq U^\C$ be a real reductive subgroup with Cartan decomposition $G=K\exp(\g{p})$, $\g{g}=\g{k}\oplus \g{p}$.
 Let $\mu_\g{p}:Z\to \g{p}$ be the gradient map associated with an $\Ad_U^*$-equivariant momentum map $\mu:Z\to \g{u}^*$, and let $f:\g{p}\to\R$ be an $\Ad_K$-invariant, strictly convex function. If $z\in Z$ is a critical point of $f\circ \mu_\g{p}:Z\to \R$, and $f$ admits a strictly convex, $\Ad_U$-invariant extension to $i\g{u}$, which is $(\g{k},\g{p})$-compatible at $\mu_\g{p}(z)$, then there exists a direct sum decomposition of $\g{g}_z=\{\xi\in\g{g}\ :\ \xi_Z(z)=0\}$
$$\g{g}_z=\bigoplus_{\lambda\geq 0}\g{g}_\lambda\ ,$$
where $\g{g}_\lambda:=\{\xi\in\g{g}_z\ :\ \ad_{(\nabla f)_{\mu_\g{p}(z)}}\xi=\lambda\xi\}$, $\g{g}_0=\g{k}_z\oplus \g{p}_z$, with $\g{k}_z=\g{g}_z\cap\g{k}$ and $\g{p}_z=\g{g}_z\cap \g{p}$. 
\end{mainthm}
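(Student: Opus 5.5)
The plan is to study the endomorphism $\ad_\beta$ of $\g{g}_z$, with $\beta:=(\nabla f)_{\mu_\g{p}(z)}$, and to get the sign of its eigenvalues from the gradient-map identity of Proposition \ref{P:Prop1} combined with the Hessian of the extension. Write $x:=\mu_\g{p}(z)$, let $F$ be the given strictly convex, $\Ad_U$-invariant, $(\g{k},\g{p})$-compatible extension of $f$, and set $H:=\#(\Hess F)_x$. By compatibility, $H$ preserves $\g{p}$, so $H|_{\g{p}}=\#(\Hess f)_x$ and $(\nabla F)_x=(\nabla f)_x=\beta$.

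\textbf{Step 1 (diagonalizability).} By Proposition \ref{P:Diffpz}, $\beta\in\g{p}_z\subseteq\g{g}_z$. Since $\g{g}_z$ is a Lie subalgebra, $\ad_\beta$ preserves $\g{g}_z$. Because $\beta\in i\g{u}$ and $\ssp{\cdot}{\cdot}$ comes from the $\Ad_U$-invariant form $\re{B}$, the map $\ad_\beta$ is $\ssp{\cdot}{\cdot}$-symmetric on $\g{u}^\C$. Hence its restriction to $\g{g}_z$ is diagonalizable with real eigenvalues, so $\g{g}_z=\bigoplus_\lambda\g{g}_\lambda$ for some real $\lambda$. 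It remains to show that every $\lambda\ge 0$ and to identify $\g{g}_0$.

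\textbf{Step 2 (two identities).} First, differentiating the $\Ad_U$-equivariance $(\nabla F)_{\Ad_u\zeta}=\Ad_u(\nabla F)_\zeta$ at $\zeta=x$ along $\eta\in\g{u}$ gives $[\eta,\beta]=H[\eta,x]$. Second, the infinitesimal version of Proposition \ref{P:Prop1}(i) gives $(\di\mu_\g{p})_z(a_Z(z))=[a,x]$ for $a\in\g{k}$.

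\textbf{Step 3 (sign of the eigenvalues).} Take $\xi=a+b\in\g{g}_\lambda$ with $a\in\g{k}$ and $b\in\g{p}$. Since $\beta\in\g{p}$, the Cartan relations (\ref{Eq:CartanRel}) split $[\beta,\xi]=\lambda\xi$ into
$$[\beta,a]=\lambda b,\qquad [\beta,b]=\lambda a.$$
From $\xi_Z(z)=0$ we get $b_Z(z)=-a_Z(z)$. Proposition \ref{P:Prop1}(ii) then gives
$$\|b_Z(z)\|^2=\ssp{(\di\mu_\g{p})_z(b_Z(z))}{b}=-\ssp{[a,x]}{b}.$$
By Step 2, $\lambda b=[\beta,a]=-H[a,x]$. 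Since $[a,x]\in\g{p}$ and $H$ is positive definite and preserves $\g{p}$, this gives $-[a,x]=\lambda H^{-1}b$. Therefore
$$\|b_Z(z)\|^2=\lambda\ssp{H^{-1}b}{b}.$$
If $\lambda<0$, the right-hand side is $\le 0$, which forces $b=0$; then $\lambda a=[\beta,b]=0$ gives $a=0$. Hence all eigenvalues are nonnegative.

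\textbf{Step 4 (the zero eigenspace).} If $\lambda=0$, Step 3 gives $b_Z(z)=0$. So $b\in\g{p}_z$, and then $a_Z(z)=-b_Z(z)=0$ gives $a\in\g{k}_z$. This shows $\g{g}_0\subseteq\g{k}_z\oplus\g{p}_z$.

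For the reverse inclusion, first take $a\in\g{k}_z$. Then $[a,x]=(\di\mu_\g{p})_z(a_Z(z))=0$, so $[a,\beta]=H[a,x]=0$.

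The delicate case, which I expect to be the main obstacle, is $b\in\g{p}_z$, where one must show $[b,\beta]=0$. Here $ib\in\g{u}$ with $(ib)_Z=J b_Z$, so $ib\in\g{u}_z$. By $U$-equivariance of $\mu$ this gives $[b,\#i\mu(z)]=0$, but $x$ is only the $\g{p}$-component of $\#i\mu(z)$. I would proceed in two moves.

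First, apply Step 2 to $\eta=ib$, using the extension $F$ at the point $\#i\mu(z)$ rather than at $x$. Second, transfer the resulting information back to $x$ and $\beta$. For the transfer I would use that $\#i\mu(z)-x$ is $\ssp{\cdot}{\cdot}$-orthogonal to $\g{p}$, and that $[b,x]\in\g{k}$ by the Cartan relations (\ref{Eq:CartanRel}). I would then test $[b,x]$ (equivalently $i[b,x]\in i\g{k}$) against $\g{k}$ using the invariance of $\re{B}$. The hypothesis that $H$ preserves $i\g{k}$ should be exactly what lets $H$ commute with the relevant projections. Once $[b,x]=0$ is established, $[b,\beta]=H[b,x]=0$ follows from Step 2.

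If this route fails, the fallback is to express $\ssp{[\beta,b]}{\kappa}$ for $\kappa\in\g{k}$ via $\ssp{\beta}{[b,\kappa]}$, and to use the positivity identity of Step 3 applied to $\xi=\kappa+b'$ for suitable $b'$.
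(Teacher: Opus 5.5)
Your Steps 1--3 and the inclusion $\g{g}_0\subseteq\g{k}_z\oplus\g{p}_z$ are correct. Step 3 is leaner than the paper's argument. It uses only the $\g{k}\to\g{p}$ half of the eigen-equation, $[\beta,a]=\lambda b$. This corresponds to the first of the two terms in the proof of Lemma \ref{lemma2}. The second term is the only place where the extension $F$ and the condition $\#(\Hess F)_x(i\g{k})\subseteq i\g{k}$ enter, and you do not need it for the sign. If you differentiate $(\nabla f)_{\Ad_k x}=\Ad_k(\nabla f)_x$ for $k\in K$ and use $\#(\Hess f)_x$ on $\g{p}$ in place of $H$, Step 3 does not need $F$ at all.

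The genuine gap is the reverse inclusion $\g{p}_z\subseteq\ker\ad_\beta$. You leave it as a plan, and the plan does not work as written:
\begin{itemize}
\item Your Step 2 identity comes from differentiating $\Ad_U$-equivariance along $\eta\in\g{u}$. But $b\in\g{p}\subseteq i\g{u}$, so ``$[b,\beta]=H[b,x]$'' is not available; $F$ is not $\Ad_{\exp(tb)}$-invariant.
\item Applying Step 2 at the point $\#i\mu(z)$ with $\eta=ib$ only tells you something about $(\nabla F)_{\#i\mu(z)}$, not about $\beta$.
\item The condition $\#(\Hess F)_x(\g{p})\subseteq\g{p}$ constrains second derivatives at a single point. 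It does not give $(\nabla F)_x=(\nabla f)_x$, and in general $(\nabla F)_x$ has a nonzero component orthogonal to $\g{p}$. This is harmless in Step 3: for $\eta=a\in\g{k}$ you can project onto $\g{p}$, because $\ad_a$ preserves $\g{p}^\perp\cap i\g{u}$. It does matter as soon as $\eta\notin\g{k}$.
\end{itemize}

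The missing step is short and needs no extension; it is how the paper concludes. Take $b\in\g{p}_z$. The Cartan relations give $[\beta,b]\in\g{k}\cap\g{g}_z=\g{k}_z$, and you have just shown that $\g{k}_z$ is annihilated by $\ad_\beta$. Hence $\ad_\beta^2 b=0$. Since $\ad_\beta$ is $\ssp{\cdot}{\cdot}$-symmetric on $\g{g}_z$, you get $\|[\beta,b]\|^2=\ssp{b}{[\beta,[\beta,b]]}=0$.

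Your own route can also be repaired. The transfer argument does give $[b,x]=0$: you have $[ib,\#i\mu(z)]=0$, $[b,x]\in\g{k}$, and $[b,\#i\mu(z)-x]\perp\g{k}$. Then apply Step 2 with $\eta=ib\in\g{u}$ (not with $b$) to get $[b,(\nabla F)_x]=0$. Finally, $[b,(\nabla F)_x-\beta]\perp\g{k}$ while $[b,\beta]\in\g{k}$, so $[b,\beta]=0$.

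Neither argument uses $\#(\Hess F)_x(i\g{k})\subseteq i\g{k}$. With the first fix, and with $\#(\Hess f)_x$ used throughout, your proof never actually needs the extension $F$.
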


We immediately notice that, by Remark \ref{R:ComplexCase}, we have the following corollary.

\begin{cor}\label{C:CorollLSW25}[C.f. Theorem 1.1 of \cite{LSW25}]
    Let $G=U^\C$, let $\mu_{p}:Z\to i\g{u}$ be the gradient map associated with an $\Ad_U^*$-equivariant momentum map $\mu:Z\to \g{u}^*$, and let $f:i\g{u}\to\R$ be an $\Ad_U$-invariant, strictly convex function.
    If $z\in Z$ is a critical point of $f\circ \mu_{p}:Z\to \R$, we have a direct sum decomposition of $\g{u}^\C_z=\{\xi\in\g{u}^\C\ :\ \xi_Z(z)=0\}$
$$\g{u}^\C_z=\bigoplus_{\lambda\geq 0}\g{u}^\C_\lambda\ ,$$
where $\g{u}^\C_\lambda:=\{\xi\in\g{u}^\C_z\ :\ \ad_{(\nabla f)_{\mu_\g{p}(z)}}\xi=\lambda\xi\}$, $\g{u}^\C_0=\g{u}_z\oplus i\g{u}_z$, with $\g{u}_z=\g{u}^\C_z\cap\g{u}$ and $i\g{u}_z=\g{u}^\C_z\cap i\g{u}$.
\end{cor}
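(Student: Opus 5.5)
The plan is to obtain the corollary by specializing Theorem \ref{mainthm} to $G=U^\C$ and then checking that every hypothesis and every object in its statement takes the claimed form.

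\textbf{Step 1: the setting.} First I will check that $U^\C$ is itself a real compatible subgroup of $U^\C$. Its Cartan decomposition is $U^\C=U\exp(i\g{u})$, so $K=U^\C\cap U=U$, $\g{k}=\g{u}$ and $\g{p}=\g{u}^\C\cap i\g{u}=i\g{u}$. The projection $\pi$ of Definition \ref{D:GradMap} is then just the orthogonal projection of $\g{u}^\C$ onto $i\g{u}$. Since $\#i\mu(z)$ already lies in $i\g{u}$, the gradient map is $\mu_{p}=\#\circ i\circ\mu$. In particular $\mu_{p}$ is the map $\mu_\g{p}$ of Theorem \ref{mainthm} for this choice of $G$.

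\textbf{Step 2: the hypotheses on $f$.} Here $f$ is defined on all of $i\g{u}=\g{p}$. It is $\Ad_U$-invariant, hence $\Ad_K$-invariant since $K=U$, and it is strictly convex. So $f$ serves as its own strictly convex, $\Ad_U$-invariant extension to $i\g{u}$. By Remark \ref{R:ComplexCase}, $f$ is automatically $(\g{k},\g{p})$-compatible at every point, in particular at $\mu_{p}(z)$. This holds because $\g{p}=i\g{u}$ and $i\g{k}=i\g{u}$ are both the whole space on which $\#(\Hess f)_{\mu_p(z)}$ acts. Thus all hypotheses of Theorem \ref{mainthm} hold at a critical point $z$ of $f\circ\mu_{p}$.

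\textbf{Step 3: reading off the conclusion.} Theorem \ref{mainthm} gives $\g{g}_z=\g{u}^\C_z=\bigoplus_{\lambda\ge0}\g{u}^\C_\lambda$, where $\g{u}^\C_\lambda$ is the $\lambda$-eigenspace of $\ad_{(\nabla f)_{\mu_p(z)}}$, and $\g{u}^\C_0=\g{k}_z\oplus\g{p}_z$. Here $\g{k}_z=\g{u}^\C_z\cap\g{u}=\g{u}_z$ and $\g{p}_z=\g{u}^\C_z\cap i\g{u}$, which is exactly the stated form. It remains to identify $\g{u}^\C_z\cap i\g{u}$ with $i\g{u}_z$, to justify the notation. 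Holomorphicity of the $U^\C$-action gives $(i\xi)_Z=J\xi_Z$ for $\xi\in\g{u}$, as used in the proof of Proposition \ref{P:Prop1}. Since $J$ is invertible, $(i\xi)_Z(z)=0$ if and only if $\xi_Z(z)=0$. Hence $\g{u}^\C_z\cap i\g{u}=i\g{u}_z$ and $\g{u}^\C_0=\g{u}_z\oplus i\g{u}_z$.

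No step is a genuine obstacle: all the content lies in Theorem \ref{mainthm}. The only point needing care is confirming that the compatibility condition is vacuous when $\g{k}=\g{u}$ and $\g{p}=i\g{u}$, and this is exactly Remark \ref{R:ComplexCase}.
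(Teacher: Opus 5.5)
Your proposal is correct and follows the paper's own argument. The paper obtains the corollary by specializing Theorem~\ref{mainthm} to $G=U^\C$, so that $K=U$, $\g{k}=\g{u}$, $\g{p}=i\g{u}$ and $f$ is its own extension, with the $(\g{k},\g{p})$-compatibility automatic by Remark~\ref{R:ComplexCase}; your extra check that $\g{u}^\C_z\cap i\g{u}=i\g{u}_z$ via $(i\xi)_Z=J\xi_Z$ is a correct detail that the paper leaves implicit in its notation.
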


In order to prove Theorem \ref{mainthm}, we need the following definition. 

\begin{defini}\label{def1} Let $F:i\g{u}\to\R$ be a smooth $\Ad_U$-invariant, strictly convex function. We define a positive definite inner product for every $\zeta\in i\g{u}$, that we denote as $\nsp{\cdot}{\cdot}_\zeta:\g{u}^\C\times \g{u}^\C\to\R$, by

\begin{equation}\label{Eq:ShiftScal}
    \nsp{\xi_1+i\eta_1}{\xi_2+i\eta_2}_\zeta:=\ssp{i\xi_1}{(\#(\Hess F)_{\zeta})^{-1} (i\xi_2)}+\ssp{i\eta_1}{(\#(\Hess F)_{\zeta})^{-1} (i\eta_2)}\ .
\end{equation}
\end{defini}

\begin{rem}
Following Remark \ref{R:tildeF}, we note that, if $\tilde F:=F\circ i$, we can write
$$\nsp{\xi_1+i\eta_1}{\xi_2+i\eta_2}_\zeta=\ssp{\xi_1}{(\#(\Hess \tilde F)_{-i \zeta})^{-1} (\xi_2)}+\ssp{i\eta_1}{(\#(\Hess F)_{\zeta})^{-1} (i\eta_2)}\ .
$$ The fact that for all $\zeta\in i\g{u}$ this is a scalar product comes from the fact that $F$ is strictly convex, i.e., $(\Hess F)_{\zeta}$ is positive definite. This also allows us to invert the isomorphism of $i\g{u}$ given by $\#\circ (\Hess F)_{\zeta}$, still obtaining a scalar product. Moreover, the splitting $\g{u}^\C=\g{u}\oplus i\g{u}$ is still orthogonal with respect to $\nsp{\cdot}{\cdot}_\zeta$ for all $\zeta\in i\g{u}$. Finally, we observe that if $G=U^\C$, hence $\g{k}=\g{u}$ and $\g{p}=i\g{u}$, the scalar product (\ref{Eq:ShiftScal}) is the real part of the Hermitian product induced on $\g{u}^\C$ by the scalar product of $\g{u}$ given by $\ssp{\cdot}{(\#(\Hess \tilde F)_{-i \zeta})^{-1} (\cdot)}$.
\end{rem}

In the next technical lemmas, we will often use the following elementary identities.
$$\dcp{\ad_\xi^*\alpha}{\eta}=-\dcp{\alpha}{\ad_\xi\eta}\qquad \ \ \forall\xi,\eta\in\g{u}^\C,\ \forall\alpha\in{(\g{u}^\C)}^*\ ;$$
$$\ssp{\ad_\xi\zeta}{\eta}=-\ssp{\zeta}{\ad_\xi\eta}\ \ \qquad \forall\xi\in\g{u},\ \forall\eta,\zeta\in\g{u}^\C\ .\ \quad\ $$
\noindent Finally, since by definition
$\ssp{\xi}{\eta}=\ssp{i\xi}{i\eta}$ for all $\xi,\eta\in \g{u}$, we can observe that if $\zeta,\xi\in i\g{u}$ and $\eta\in \g{u}$, then 
$$\ssp{\ad_{\zeta}\xi}{\eta}=-\ssp{\ad_{i\zeta}i\xi}{\eta}=\ssp{i\xi}{\ad_{i\zeta}\eta}=\ssp{\xi}{\ad_\zeta\eta}\ .$$

\begin{lemma}\label{lemma1} For all $\xi\in\g{u}$, $\eta,\zeta\in i\g{u}$, and $F:i\g{u}\to \R$ any $\Ad_U$-invariant function, for every $t\in\R$ we have that 
$$\dcp{(\di F)_{\Ad_{\exp(t\xi)}\eta}}{\Ad_{\exp(t\xi)}\zeta}=\dcp{(\di F)_\eta}{\zeta}\ ,$$
i.e., the quantity on the LHS actually does not depend on $t$.
\end{lemma}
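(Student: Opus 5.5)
The idea is to differentiate the $\Ad_U$-invariance of $F$ along the curve $\eta\mapsto\Ad_{\exp(t\xi)}\eta$. We then either observe that the derivative of the given expression in $t$ vanishes, or, more directly, invoke the chain rule for the invariant function. Throughout, set $g_t:=\exp(t\xi)\in U$. Since $\xi\in\g{u}$, $\Ad_{g_t}$ preserves $i\g{u}$, so every quantity in the statement is well defined.

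The first step is to use invariance: for every $x\in i\g{u}$ we have $F(\Ad_{g_t}x)=F(x)$, so $F\circ\Ad_{g_t}=F$ as functions on $i\g{u}$. Differentiating at the point $\eta$ in the direction $\zeta$ and applying the chain rule, where $\Ad_{g_t}$ is linear and therefore equal to its own differential, gives
$$\dcp{(\di F)_{\Ad_{g_t}\eta}}{\Ad_{g_t}\zeta}=\dcp{(\di F)_\eta}{\zeta}\ .$$
This is exactly the claim, and it holds for each fixed $t$ individually. The left-hand side is therefore constant in $t$, with value the $t=0$ expression.

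A consistency check, which could alternatively serve as the main argument, is to differentiate the left-hand side in $t$. Writing $\#(\Hess F)$ for the Hessian, one obtains the term $(\Hess F)_{\Ad_{g_t}\eta}(\ad_\xi\Ad_{g_t}\eta,\Ad_{g_t}\zeta)+\dcp{(\di F)_{\Ad_{g_t}\eta}}{\ad_\xi\Ad_{g_t}\zeta}$. This vanishes by differentiating the identity $\dcp{(\di F)_{\Ad_{g_s}y}}{\ad_\xi\Ad_{g_s}y}=0$, itself the derivative of $F(\Ad_{g_s}y)=F(y)$, polarized in $y$. The chain-rule argument above avoids this computation entirely, so there is no real obstacle. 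The only points to verify are that $\Ad_{g_t}$ maps $i\g{u}$ to itself, which holds because $\Ad_U$ commutes with multiplication by $i$ on $\g{u}^\C$, and that $F$ is differentiable, which is assumed (smooth).
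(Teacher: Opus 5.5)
Your proposal is correct and follows the paper's proof: both differentiate the identity $F\circ\Ad_{\exp(t\xi)}=F$ at $\eta$ in the direction $\zeta$, using linearity of $\Ad_{\exp(t\xi)}$, which the paper writes as $\der{}{s}F(\Ad_{\exp(t\xi)}(\eta+s\zeta))\big|_{s=0}=\der{}{s}F(\eta+s\zeta)\big|_{s=0}$. Your $t$-derivative check is not needed, and what you call polarizing is really differentiating $\dcp{(\di F)_y}{\ad_\xi y}=0$ in $y$, but this does not affect the main argument.
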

\begin{proof}
The property follows directly from the $\Ad_U$-invariance of $F$.
$$\dcp{(\di F)_{\Ad_{\exp(t\xi)}\eta}}{\Ad_{\exp(t\xi)}\zeta}=\der{}{s}F(\Ad_{\exp(t\xi)}\eta+s\Ad_{\exp(t\xi)}\zeta)\big|_{s=0}=$$
$$=\der{}{s}F(\Ad_{\exp(t\xi)}(\eta+s\zeta))\big|_{s=0}=\der{}{s}F(\eta +s\zeta)\big|_{s=0}=\dcp{(\di F)_\eta}{\zeta}\ .\quad$$
\end{proof}

\begin{lemma}\label{lemma2} Let $f:\g{p}\to \R$ be an $\Ad_K$-invariant, strictly convex function and let $F:i\g{u}\to \R$ be a strictly convex, $\Ad_U$-invariant extension of $f$ to $i\g{u}$, which is $(\g{k},\g{p})$-compatible at $\mu_{\g{p}}(z)$. For all $\xi,\eta\in \g{g}$, we have the following identity

$$ \nsp{\ad_{(\nabla f)_{\mu_\g{p}(z)}}\xi}{\eta}_{\mu_\g{p}(z)}=\ssp{\ad_{\mu_\g{p}(z)}\xi}{\eta}\ .$$
\end{lemma}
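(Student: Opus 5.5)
The plan is to reduce the identity to one infinitesimal consequence of $\Ad_U$-invariance and then check it on the $\g{k}$- and $\g{p}$-components of $\xi$. Write $\zeta:=\mu_\g{p}(z)$ and $H:=\#(\Hess F)_\zeta:i\g{u}\to i\g{u}$. This $H$ is $\ssp{\cdot}{\cdot}$-symmetric, positive definite, and by $(\g{k},\g{p})$-compatibility it preserves $\g{p}$ and $i\g{k}$; since $i\g{u}=\g{p}\oplus(\g{p}^\perp\cap i\g{u})$ and $H$ is symmetric, $H^{-1}$ preserves these subspaces as well.

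\emph{Step 1 (intertwining relation).} Lemma \ref{lemma1} says that $(\di F)$ is $\Ad_U$-equivariant, i.e. $(\nabla F)_{\Ad_u\zeta}=\Ad_u(\nabla F)_\zeta$. Put $u=\exp(t\theta)$ with $\theta\in\g{u}$ and differentiate at $t=0$. This gives $H([\theta,\zeta])=[\theta,(\nabla F)_\zeta]$, that is,
\begin{equation*}
H\circ\ad_\zeta=\ad_{(\nabla F)_\zeta}\quad\text{on }\g{u}\ .
\end{equation*}
Both sides map $\g{u}$ into $i\g{u}$. By $\C$-linearity of the bracket, for $\eta'\in i\g{u}$ we get $\ad_{(\nabla F)_\zeta}\eta'=-i\,H(\ad_\zeta(i\eta'))$.

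\emph{Step 2 (identify $\nabla f$ with $\nabla F$).} Since $f=F|_\g{p}$, we have $(\nabla f)_\zeta=\pi(\nabla F)_\zeta$. We want $(\nabla F)_\zeta\in\g{p}$, so that $\ad_{(\nabla f)_\zeta}=\ad_{(\nabla F)_\zeta}$. I expect this to be the main obstacle, because compatibility is assumed only at the single point $\zeta$. I would first try to extract it from Step 1. By invariance, $(\nabla F)_\zeta$ lies in the centre of the centralizer of $\zeta$ in $i\g{u}$. Combining this with $H$ preserving $\g{p}$ and $i\g{k}$, one would hope to show that the $\g{p}^\perp$-component of $(\nabla F)_\zeta$ pairs to zero with every $\ad_\zeta\xi$, $\xi\in\g{g}$. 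If that fails, I would prove the identity with $\nabla F$ in place of $\nabla f$ and check that only the $\g{p}$-projection contributes when one pairs against $\eta\in\g{g}$.

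\emph{Step 3 (computation by components).} Both sides are bilinear, and $\g{u}\oplus i\g{u}$ is orthogonal for both $\ssp{\cdot}{\cdot}$ and $\nsp{\cdot}{\cdot}_\zeta$. By the Cartan relations it therefore suffices to treat two cases.
\begin{itemize}
\item[(a)] $\xi\in\g{k}$ and $\eta\in\g{p}$. Here $\ad_{(\nabla F)_\zeta}\xi=H(\ad_\zeta\xi)\in i\g{u}$. On $i\g{u}$ the scalar product $\nsp{\cdot}{\cdot}_\zeta$ equals $\ssp{\cdot}{H^{-1}\cdot}$. Hence the left-hand side is $\ssp{H\ad_\zeta\xi}{H^{-1}\eta}=\ssp{\ad_\zeta\xi}{\eta}$, using the symmetry of $H$.
\item[(b)] $\xi\in\g{p}$ and $\eta\in\g{k}$. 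Here $\ad_{(\nabla F)_\zeta}\xi=-iH(i\ad_\zeta\xi)\in\g{u}$. On $\g{u}$ one has $\nsp{a}{b}_\zeta=\ssp{ia}{H^{-1}ib}$. This gives $\ssp{H(i\ad_\zeta\xi)}{H^{-1}(i\eta)}=\ssp{i\ad_\zeta\xi}{i\eta}=\ssp{\ad_\zeta\xi}{\eta}$.
\end{itemize}
The mixed cases vanish on both sides by orthogonality. Here compatibility (that $H^{\pm1}$ preserves $i\g{k}$ and $\g{p}$) is used to ensure that the relevant vectors stay in $i\g{k}$ and $\g{p}$.
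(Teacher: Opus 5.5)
Your Step 2 is left open, and it is exactly the step that uses the hypothesis. Your Step 3, run with $(\nabla F)_\zeta$, never uses compatibility. Since $H$ is $\ssp{\cdot}{\cdot}$-symmetric on $i\g{u}$, one has $\ssp{Ha}{H^{-1}b}=\ssp{a}{b}$ for all $a,b\in i\g{u}$. So the computation of Steps 1 and 3 gives $\nsp{\ad_{(\nabla F)_\zeta}\xi}{\eta}_\zeta=\ssp{\ad_\zeta\xi}{\eta}$ for all $\xi,\eta\in\g{u}^\C$ and every strictly convex $\Ad_U$-invariant $F$. Your closing remark about where compatibility enters is therefore misplaced. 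The passage from $\nabla F$ to $\nabla f$ is the whole content of the lemma, and it genuinely needs compatibility. Take $G$ the diagonal $\mathrm{SL}(2,\R)$ in $\mathrm{SL}(2,\C)\times\mathrm{SL}(2,\C)$ and the non-compatible $F(X,Y)=\frac12\ssp{X}{X}+\ssp{Y}{Y}$. With $\zeta=(h,h)$, $h=\mathrm{diag}(1,-1)$, $\xi=(A,A)$ for $0\neq A\in\g{so}(2)$, and $\eta=\ad_\zeta\xi$, the left side of the lemma is $\frac98$ of the right side.

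Your primary route cannot succeed either, because $(\nabla F)_\zeta\in\g{p}$ is false in general even under compatibility. Take $U=\mathrm{U}(n)$, $G=\mathrm{SL}(n,\R)$ and $F(X)=\frac12\ssp{X}{X}+c\,\re{\text{Tr}(X)}$. Then $H=\mathrm{Id}$ and $f=\frac12\ssp{\cdot}{\cdot}|_{\g{p}}$, but $(\nabla F)_\zeta=\zeta+cI\notin\g{p}$. Your reformulation, that the $\g{p}^\perp$-component pairs to zero with every $\ad_\zeta\xi$ for $\xi\in\g{g}$, holds automatically: $\ad_\zeta\g{k}\subseteq\g{p}$ and $\ad_\zeta\g{p}\subseteq\g{u}\perp i\g{u}$. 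So it carries no information.

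What is missing is your fallback, actually carried out. Let $w:=(\nabla F)_\zeta-(\nabla f)_\zeta$. Since $(\nabla f)_\zeta=\pi((\nabla F)_\zeta)$, $w\in i\g{u}$ is orthogonal to $\g{p}$. For $\xi\in\g{k}$ and $\eta\in\g{p}$,
\[
\nsp{\ad_w\xi}{\eta}_\zeta=\ssp{\ad_w\xi}{H^{-1}\eta}=\ssp{w}{\ad_\xi(H^{-1}\eta)}=0,
\]
because $H^{-1}\eta\in\g{p}$ by compatibility and $[\g{k},\g{p}]\subseteq\g{p}$. For $\xi\in\g{p}$ and $\eta\in\g{k}$, compatibility gives $H^{-1}(i\eta)=i\eta'$ with $\eta'\in\g{k}$, and
\[
\nsp{\ad_w\xi}{\eta}_\zeta=\ssp{\ad_w\xi}{\eta'}=-\ssp{w}{\ad_\xi\eta'}=0,
\]
since $[\g{p},\g{k}]\subseteq\g{p}$. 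The other two cases vanish by orthogonality of $\g{u}\oplus i\g{u}$.

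With this inserted, your proof is complete and is the paper's argument reorganized. The paper pairs $\ad_{(\nabla f)_\zeta}\xi^{\g{k}}$ only with $\sigma\in\g{p}$ and $\ad_{(\nabla f)_\zeta}\xi^{\g{p}}$ only with $\sigma\in\g{k}$, where $\nabla f$ may be replaced by $\nabla F$. This yields your Step 1 relation projected onto $\g{p}$, respectively $\g{k}$. The paper then uses $H(\g{p})\subseteq\g{p}$ and $H(i\g{k})\subseteq i\g{k}$ to upgrade these to the exact equalities $\ad_{(\nabla f)_\zeta}\xi^{\g{k}}=H\ad_\zeta\xi^{\g{k}}$ and $\ad_{(\nabla f)_\zeta}\xi^{\g{p}}=-iH(i\ad_\zeta\xi^{\g{p}})$. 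Your ordering has one advantage: it shows that the identity for $\nabla F$ is unconditional and that compatibility is needed only for the correction term $w$.
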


\begin{proof}
Clearly, $(\nabla f)_{\mu_\g{p}(z)}\in \g{p}$ and therefore, 
 thanks to the properties of the Cartan decomposition (\ref{Eq:CartanRel}) and the orthogonality of the splitting with respect to the shifted scalar product (\ref{Eq:ShiftScal}), we get
\begin{equation}\label{Eq:Spliteq}
    \nsp{\ad_{(\nabla f)_{\mu_\g{p}(z)}}\xi}{\eta}_{\mu_\g{p}(z)}=\nsp{\ad_{(\nabla f)_{\mu_\g{p}(z)}}\xi^\g{k}}{\eta^\g{p}}_{\mu_\g{p}(z)}+\nsp{\ad_{(\nabla f)_{\mu_\g{p}(z)}}\xi^\g{p}}{\eta^\g{k}}_{\mu_\g{p}(z)}\ .
\end{equation}

Consider the term $\ad_{(\nabla f)_{\mu_\g{p}(z)}}\xi^\g{k}=-\ad_{\xi^\g{k}}(\nabla f)_{\mu_\g{p}(z)}$ and write $\zeta:=\mu_\g{p}(z)$ for simplicity. We then observe that for all 
$\sigma\in{\g{p}}$, thanks to Lemma \ref{lemma1} and since $\ad_{\xi^\g{k}}\sigma\in \g{p}$, we have 

\begin{equation}\label{Eq:ParteFacile}
    \begin{split}
        &\ssp{\ad_{\xi^\g{k}}(\nabla f)_\zeta}{\sigma}=-\ssp{(\nabla f)_\zeta}{\ad_{\xi^\g{k}}\sigma}=-\ssp{(\nabla F)_\zeta}{\ad_{\xi^\g{k}}\sigma}=\\
        &=-\dcp{(\di F)_\zeta}{\ad_{\xi^\g{k}}\sigma}=-\dcp{(\di F)_\zeta}{\der{}{t}\Ad_{\exp(t\xi^{\g{k}})}\sigma\big|_{t=0}}=\\
        &=-\der{}{t}\dcp{(\di F)_{\Ad_{\exp(t\xi^{\g{k}})}\zeta}}{\Ad_{\exp(t\xi^{\g{k}})}\sigma}\big|_{t=0}+\dcp{\der{}{t}(\di F)_{\Ad_{\exp(t\xi^{\g{k}})}\zeta}\big|_{t=0}}{\sigma}=\\
        &=\dcp{\der{}{t}(\di F)_{\Ad_{\exp(t\xi^{\g{k}})}\zeta}\big|_{t=0}}{\sigma}=\dcp{(\Hess F)_\zeta(\ad_{\xi^{\g{k}}}\zeta)}{\sigma}=\ssp{\#(\Hess F)_\zeta(\ad_{\xi^{\g{k}}}\zeta)}{\sigma}\ .
    \end{split}
\end{equation}
Therefore, since $\ad_{(\nabla f)_{\mu_\g{p}(z)}}\xi^{\g{k}}\in\g{p}$, and $F$ is $(\g{k},\g{p})$-compatible at $\zeta=\mu_\g{p}(z)$, we have proved that  

$$\ad_{(\nabla f)_{\mu_\g{p}(z)}}\xi^{\g{k}}=-\#(\Hess F)_\zeta(\ad_{\xi^{\g{k}}}\zeta)\ ,$$

\noindent Using Definition \ref{def1}, we conclude  

$$\nsp{\ad_{(\nabla f)_{\mu_\g{p}(z)}}\xi^\g{k}}{\eta^\g{p}}_{\mu_\g{p}(z)}=\ssp{\ad_{\mu_\g{p}(z)}\xi^{\g{k}}}{\eta^\g{p}}\ .$$

Now we consider the term $\ad_{(\nabla f)_{\mu_\g{p}(z)}}\xi^\g{p}=-\ad_{\xi^\g{p}}(\nabla f)_{\mu_\g{p}(z)}$. First we notice that for all $\sigma\in\g{k}$ we have 
$$\ssp{\ad_{(\nabla f)_\zeta}\xi^\g{p}}{\sigma}={-}\ssp{\ad_{\xi^\g{p}}(\nabla f)_\zeta}{\sigma}=\qquad\qquad\qquad\quad\   $$

$$=-\ssp{(\nabla f)_\zeta}{\ad_{\xi^\g{p}}\sigma}={-}\ssp{(\nabla F)_\zeta}{\ad_{\xi^\g{p}}\sigma}\ ,\qquad\qquad\ \ \ $$

\noindent where the last equality is due to the fact that $\ad_{\xi^\g{p}}\sigma\in \g{p}$ and by hypotheses $f=F|_{\g{p}}$. Let us denote for simplicity $\alpha:=i\xi^\g{p}\in \g{u}$ and $\beta:=i\sigma\in i\g{u}$. Then, using again Lemma \ref{lemma1}, we get

\begin{equation}\label{Eq:CriticalEq}
    \begin{split}
        &-\ssp{(\nabla F)_\zeta}{\ad_{\xi^\g{p}}\sigma}=\ssp{(\nabla F)_\zeta}{\ad_{\alpha}\beta}=\dcp{(\di F)_\zeta}{\ad_{\alpha}\beta}=\\
        &=\dcp{(\di F)_\zeta}{\der{}{t}\Ad_{\exp(t\alpha)}\beta\big|_{t=0}}=\\
        &=\der{}{t}\left(\dcp{(\di F)_{\Ad_{\exp(t\alpha)}\zeta}}{\Ad_{\exp(t\alpha)}\beta}\right)\bigg|_{t=0}-\dcp{\der{}{t}(\di F)_{\Ad_{\exp(t\alpha)}\zeta}\big|_{t=0}}{\beta}=\\
        &=-\dcp{\der{}{t}(\di F)_{\Ad_{\exp(t\alpha)}\zeta}\big|_{t=0}}{\beta}=-\dcp{(\Hess F)_\zeta(\ad_\alpha\zeta)}{\beta}=-\ssp{\#(\Hess F)_\zeta \ad_{\alpha}\zeta}{\beta}\ .
        \end{split}
\end{equation}

\noindent Therefore, we conclude that for all $\sigma\in\g{k}$ we have
$$\ssp{\ad_{(\nabla f)_\zeta}\xi^\g{p}}{\sigma}={-\ssp{\#(\Hess F)_\zeta \ad_{\alpha}\zeta}{\beta}} = \ssp{i\#(\Hess F)_\zeta i\ \ad_{\xi^\g{p}}\zeta}{\sigma}\ .$$
Finally, we can rewrite the second term as

\begin{equation}\label{Eq:CritEq2}
    \begin{split}
        &\nsp{\ad_{(\nabla f)_{\mu_{\g{p}}(z)}}\xi^\g{p}}{\eta^\g{k}}_{\mu_{\g{p}}(z)}=\ssp{\ad_{(\nabla f)_{\mu_{\g{p}}(z)}}\xi^\g{p}}{(\#(\Hess \tilde F))_{-i\zeta}^{-1}\eta^\g{k}}=\\
        &=\ssp{i\#(\Hess F)_\zeta i\ \ad_{\xi^\g{p}}\zeta}{(\#(\Hess \tilde F))_{-i\zeta}^{-1}\eta^\g{k}}=-\ssp{\#(\Hess F)_\zeta i\ \ad_{\xi^\g{p}}\zeta}{i(\#(\Hess \tilde F))_{-i\zeta}^{-1}\eta^\g{k}}=\\
        &=-\ssp{\#(\Hess F)_\zeta i\ \ad_{\xi^\g{p}}\zeta}{(\#(\Hess F))_{\zeta}^{-1}i\eta^\g{k}}=-\ssp{i\ \ad_{\xi^\g{p}}\zeta}{i\eta^\g{k}}=\\
        &=\ssp{i \ad_{\zeta}\xi^\g{p}}{i\eta^\g{k}}=\ssp{\ad_{\zeta}\xi^\g{p}}{\eta^\g{k}}\ .  
    \end{split}
\end{equation}

Summing up, always by the Cartan relations (\ref{Eq:CartanRel}), and using the $\ssp{\cdot}{\cdot}$-orthogonality of the splitting $\g{g}=\g{k}\oplus\g{p}$, we conclude that

$$ \nsp{\ad_{(\nabla f)_{\mu_\g{p}(z)}}\xi}{\eta}_{\mu_\g{p}(z)}=\ssp{\ad_{{\mu_\g{p}(z)}}\xi^\g{k}}{\eta^\g{p}}+\ssp{\ad_{{\mu_\g{p}(z)}}\xi^\g{p}}{\eta^\g{k}}=\ssp{\ad_{{\mu_\g{p}(z)}}\xi}{\eta}\ .$$
\end{proof}

\begin{rem}
    Note that in Equation (\ref{Eq:ParteFacile}) we can write $f$ or $F$ interchangeably, since we evaluate and differentiate $f$ inside $\g{p}$ and, by hypotheses, $f=F|_{\g{p}}$. On the contrary, in Equation (\ref{Eq:CriticalEq}), we have to differentiate $F$ along directions that, in general, lie in $i\g{u}$, hence we need to work with an extension of $f$. We further point out that $\#(\Hess F)_\zeta(\g{p})\subseteq \g{p}$ was used to treat the first term of Equation (\ref{Eq:Spliteq}) while $\#(\Hess F)_\zeta(i\g{k})\subseteq i\g{k}$ is crucial for treating the second term of (\ref{Eq:Spliteq}): specifically, it has been used in the second equality of Equation (\ref{Eq:CritEq2}).
\end{rem}

We are now ready to prove Theorem \ref{mainthm}.

\begin{proof} \textit{(Theorem \ref{mainthm})} \textit{Step 1}.
We first prove that the theorem is valid for the particular case of 
$f(\cdot)={\frac 12}\ssp{\cdot}{\cdot}|_{\g{p}}$. By Proposition \ref{P:Diffpz}, and Remark \ref{R:Diffsspz}, $(\nabla f)_{\mu_\g{p}(z)}=\mu_\g{p}(z)\in \g{p}_z=\g{g}_z\cap \g{p}$. In general, given a Cartan decomposition $\g{g}=\g{k}\oplus \g{p}$, we know that for all $\beta\in \g{p}$ the endomorphism $\ad_\beta:\g{g}\to \g{g}$ is diagonalizable. Since $\g{g}_z$ is a Lie subalgebra of $\g{g}$, $\ad_{\mu_\g{p}(z)}$ is a diagonalizable endomorphism of $\g{g}_z$. Let $\xi\in\g{g}_z$ be an eigenvector of $\ad_{\mu_\g{p}(z)}$ of eigenvalue $\lambda\in\R$. Then
\begin{equation}\label{Eq:MomentSSP0}\ssp{\ad_{\mu_\g{p}(z)}\xi}{\xi}=\lambda\ssp{\xi}{\xi}\ .\end{equation}
Write $\xi=\xi^{\g{k}}+\xi^\g{p}$ {with $\xi^{\g{k}}\in \g{k}, \xi^\g{p}\in \g{p}$} with 
respect to the decomposition $\g{g}=\g{k}\oplus \g{p}$. 
We have already observed that 
\begin{equation*}\ssp{\ad_{{\mu_\g{p}(z)}}\xi}{\xi}=\ssp{\ad_{{\mu_\g{p}(z)}}\xi^\g{k}}{\xi^\g{p}}+\ssp{\ad_{{\mu_\g{p}(z)}}\xi^\g{p}}{\xi^\g{k}}\ .\end{equation*}
We further notice that $$\ssp{\ad_{{\mu_\g{p}(z)}}\xi^\g{p}}{\xi^\g{k}}=-\ssp{\ad_{\xi^\g{p}}\mu_{\g{p}}(z)}{\xi^\g{k}}=\ssp{\mu_{\g{p}}(z)}{\ad_{\xi^\g{k}}\xi^\g{p}}=-\ssp{\ad_{\xi^\g{k}}\mu_{\g{p}}(z)}{\xi^\g{p}}\ .$$
Therefore, Equation (\ref{Eq:MomentSSP0}) is equivalent to $$\lambda\ssp{\xi}{\xi}=-2\ssp{\ad_{\xi^\g{k}}\mu_{\g{p}}(z)}{\xi^\g{p}}\ .$$ Moreover, we have 
\begin{equation*}
    \begin{split}
        &\ssp{\ad_{\xi^\g{k}}\mu_{\g{p}}(z)}{\xi^\g{p}}=\der{}{t}\ssp{\Ad_{\exp(t\xi^{\g{k}})}(\mu_\g{p}(z))}{\xi^\g{p}}\bigg|_{t=0}=\\
        &=\der{}{t}\ssp{\mu_\g{p}(\exp(t\xi^{\g{k}}).z}{\xi^\g{p}}\bigg|_{t=0}=\ssp{(\di\mu_{\g{p}})_z(\xi_Z^\g{k}(z))}{\xi^\g{p}}=\\
        &=Q_z(\xi_Z^\g{k}(z),\xi_Z^\g{p}(z))\ ,
    \end{split}
\end{equation*}
\noindent where, in the last equality, we used Item $(ii)$ of Proposition \ref{P:Prop1}.

Finally, since $\xi^\g{p}_Z(z)+\xi^{\g{k}}_Z(z)=\xi_Z(z)=0$, because $\xi\in\g{g}_z$, we obtain
$$\lambda\ssp{\xi}{\xi}=-2Q_z(\xi^{\g{p}}_Z(z),\xi^{\g{k}}_Z(z))=2Q_z(\xi^{\g{p}}_Z(z),\xi^{\g{p}}_Z(z))\geq 0\ ,$$
from which we conclude that $\lambda\geq 0$.\\

\noindent\textit{Step 2.} Consider now a general $f$ as in the hypothesis of Theorem \ref{mainthm}. By Proposition \ref{P:Diffpz}, $(\nabla f)_{\mu_\g{p}(z)}\in \g{p}_z$, and then $\ad_{(\nabla f)_{\mu_\g{p}(z)}}:\g{g}_z\to\g{g}_z$ is a diagonalizable endomorphism. Let $F$ be an $\Ad_U$-invariant, strictly convex extension of $f$ to $i\g{u}$ which is $(\g{k},\g{p})$-compatible at $\mu_\g{p}(z)\in i\g{u}$, and let $\nsp{\cdot}{\cdot}_{\mu_\g{p}(z)}:\g{u}^\C\times\g{u}^\C\to \R$ be the shifted scalar product of Definition \ref{def1}. From the computations in Step 1 and using Lemma \ref{lemma2}, we conclude that if $\xi\in\g{g}_z$ is an eigenvector of $\ad_{(\nabla f)_{\mu_\g{p}(z)}}$ of eigenvalue $\lambda$, we have 
\begin{equation}\label{autovalore}
\lambda \nsp{\xi}{\xi}{_{\mu_\g{p}(z)}}=\nsp{\ad_{(\nabla f)_{\mu_\g{p}(z)}}\xi}{\xi}_{\mu_\g{p}(z)}=\ssp{\ad_{\mu_\g{p}(z)}\xi}{\xi}=2Q|_z(\xi^{\g{p}}_Z(z),\xi^{\g{p}}_Z(z))\geq 0\ ,\end{equation}
showing that also in this case $\lambda\geq 0$.

{We 
 now prove that $\ker\left(\ad_{(\nabla f)_{\mu_\g{p}(z)}}\right) = \g{k}_z\oplus\g{p}_z$. 
 We note that if $\xi\in\g{g}_z$ is in the kernel of $\ad_{(\nabla f)_{\mu_\g{p}(z)}}$ then, by \eqref{autovalore}, we have $\xi_Z^\g{p}(z)=0$, hence $\xi_Z^\g{k}(z)=0$ and $\xi \in 
    \g{k}_z\oplus \g{p}_z$. 
    
Viceversa, let $Y\in \g{k}_z$ and note that the $K$-equivariance of $\mu_{\g{p}}$ implies $[Y,\mu_{\g{p}}(z)]=0$ and hence $\Ad_{\exp(tY)}\mu_{\g{p}}(z)=\mu_{\g{p}}(z)$ for all $t\in \R$. Moreover, by the $\Ad(K)$-invariance of $f$ we have 
$$\Ad_{\exp(tY)}(\nabla f)_{\mu_{\g{p}}(z)} = (\nabla f)_{\Ad_{\exp(tY)}\mu_{\g{p}}(z)} = (\nabla f)_{\mu_{\g{p}}(z)}\ ,$$
and therefore $[Y,(\nabla f)_{\mu_{\g{p}}(z)}] =0$, showing that $[\g{k}_z,(\nabla f)_{\mu_{\g{p}}(z)}] =0$.

We now take $X\in \g{p}_z$. We clearly have that $[X,(\nabla f)_{\mu_{\g{p}}(z)}] \in \g{k}_z$ by the Cartan relation and the fact that both vectors belong to the isotropy subalgebra. Therefore,
$$\left[\left[X,(\nabla f)_{\mu_{\g{p}}(z)}\right],(\nabla f)_{\mu_{\g{p}}(z)}\right] =0\ .$$
As $\ad_{(\nabla f)_{\mu_{\g{p}}(z)}}$ is diagonalizable, the kernel of its square coincides with its kernel and therefore $X\in \ker\left(\ad_{(\nabla f)_{\mu_{\g{p}}(z)}}\right)$, showing that $[\g{p}_z,(\nabla f)_{\mu_{\g{p}}(z)}] =0$ and proving our claim.
 }\end{proof}

\begin{rem}\label{R:SignOp}
    In the paper \cite{LSW25} the authors use the same convention as ours for the momentum map and for the Riemannian metric. The difference between the signs of the operators in (\ref{Eq1:CMdec}) and (\ref{Eq:CMdec2}) in the case of $G=U^\C$ can be explained as follows. Let $f:\g{u}^*\to \R$ be an $\Ad_U^*$-invariant function. Then,
    if $\alpha\in \g{u}^*$, $(\di f)_\alpha\in T^*_{\alpha}\g{u}^*\simeq \g{u}$. To compare with our setting, we need a function $i\g{u}\to\R$, because we have to compose it with $\mu_{\g{p}}:Z\to \g{p}=i\g{u}$.
    It can be easily checked that $F:i\g{u}\to \R$, defined by $F(\zeta):=f(\#^{-1}(i\zeta))$ for all $\zeta\in i\g{u}$, is in fact convex and $\Ad_U$-invariant. Moreover, for such function, we have the relation \begin{equation}\label{Eq:Segno}\left(\nabla F\right)_{{\mu_\g{p}}}= -i (\di f)_{\mu}\ .\end{equation} Now, by Theorem \ref{mainthm} (or by Corollary \ref{C:CorollLSW25}), $\ad_{(\nabla F)_{\mu_\g{p}(z)}}$ has non negative eigenvalues. By (\ref{Eq:Segno}), this is in accordance with the fact that in (\ref{Eq1:CMdec}) the operator $\ad_{i(\di f)_\mu}$ has non positive eigenvalues. 
\end{rem}

\section{Application}
 We can use Theorem \ref{mainthm} to prove the following Theorem \ref{mainthm2} which was known in the case $G=U^\C$ (see Theorem 13.5 in \cite{GRS24}). It is noteworthy that the proof of this result for a general reductive group $G$ is very different from the specific case $G=U^\C$ (see Remark \ref{R:Salamon}).

 \begin{mainthm}\label{mainthm2}
 Let $(Z,J,\omega)$ be a compact K\"ahler manifold which is acted on isometrically by a compact connected group $U\subset \text{U}(n)$. Let $G$ be a real compatible subgroup of $U^{\mathbb C}$. Let $\mu_\g{p}:Z\to \g{p}$ be the gradient map associated with an $\Ad_U^*$-equivariant momentum map $\mu:Z\to \g{u}^*$, and let $f:\g{p}\to\R$ be an $\Ad_K$-invariant, strictly convex function. If $z\in Z$ is a critical point of $f\circ \mu_\g{p}:Z\to \R$, and $f$ admits a strictly convex, $\Ad_U$-invariant extension to $i\g{u}$, which is $(\g{k},\g{p})$-compatible at $\mu_\g{p}(z)$, then
$$K_z^o= G_z^o\cap K$$
and $K_z^o$ is a maximal compact subgroup of $G_z^o$.

 \end{mainthm}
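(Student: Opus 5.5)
The plan is to turn the Lie algebra decomposition of Theorem \ref{mainthm} into a group decomposition $G_z^o=H\ltimes N$. Here $H$ is a real compatible group with maximal compact subgroup $K_z^o$, and $N$ is a contractible nilpotent normal subgroup. Throughout, set $\beta:=(\nabla f)_{\mu_\g{p}(z)}\in\g{p}_z$ (Proposition \ref{P:Diffpz}), and write $\g{n}:=\bigoplus_{\lambda>0}\g{g}_\lambda$, so that $\g{g}_z=\g{g}_0\oplus\g{n}$ with $\g{g}_0=\g{k}_z\oplus\g{p}_z$.

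\emph{Step 1 (Lie algebra structure).} Since $\ad_\beta$ is a derivation, $[\g{g}_\lambda,\g{g}_\mu]\subseteq\g{g}_{\lambda+\mu}$. Hence $\g{n}$ is a nilpotent ideal of $\g{g}_z$ and $\g{g}_0$ is a subalgebra. Because $\beta\in i\g{u}$ is a Hermitian matrix, every $X$ with $\ad_\beta X=\lambda X$, $\lambda>0$, is a nilpotent matrix: it strictly raises $\beta$-eigenvalues. So $\g{n}$ consists of nilpotent matrices, and $N:=\exp(\g{n})$ is a closed, simply connected unipotent subgroup, diffeomorphic to $\g{n}$.

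\emph{Step 2 (the reductive part).} Let $G^\beta$ be the centralizer of $\beta$ in $G$. It is compatible: $G^\beta=K^\beta\exp(\g{p}^\beta)$. Put $H:=G_z\cap G^\beta$. I would show that $H=(K_z\cap K^\beta)\exp(\g{p}_z\cap\g{p}^\beta)$. Write $g=k\exp(X)\in H$. One needs that $\exp(X).z'=z'$ with $X\in\g{p}$ forces $X_Z(z')=0$. This is the standard gradient map argument: by Proposition \ref{P:Prop1}(ii), $t\mapsto\mu_\g{p}^X(\exp(tX).z')$ has derivative $\|X_Z\|^2\geq0$, so it is strictly increasing unless $X_Z$ vanishes along the orbit. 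One must take some care to reduce to a point fixed by $\exp(X)$, for instance by using uniqueness in the Cartan decomposition and $k$-equivariance. By Theorem \ref{mainthm}, $\g{g}_0=\ker\ad_\beta|_{\g{g}_z}=\g{k}_z\oplus\g{p}_z$, so $\mathrm{Lie}(H)=\g{g}_0$. Moreover $\g{k}_z$ commutes with $\beta$, which gives $K_z^o\subseteq H^o=K_z^o\exp(\g{p}_z)$. Hence $K_z^o$ is a maximal compact subgroup of $H^o$, and $H^o$ retracts onto it.

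\emph{Step 3 (global decomposition).} I would show that the multiplication map $H^o\times N\to G_z^o$ is a diffeomorphism and that $N$ is normal. Normality follows from $\Ad_{\exp\g{g}_0}$ and $\Ad_{\exp\g{n}}$ preserving $\g{n}$. For $H^o\cap N=\{e\}$: for $n\in N$, $\exp(-t\beta)\,n\,\exp(t\beta)\to e$ as $t\to-\infty$, whereas elements of $H$ are fixed by this conjugation. Surjectivity follows from the Lie algebra splitting together with connectedness, since $H^oN$ is an open subgroup. I expect this step to be the main obstacle, in particular checking closedness and that the product is a topological semidirect product. Granting it, $G_z^o$ deformation retracts onto $H^o$ and hence onto $K_z^o$. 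Any maximal compact subgroup of $G_z^o$ therefore has the same dimension as $K_z^o$. Since every compact subgroup lies in a conjugate of a maximal compact one, $K_z^o$ is maximal compact in $G_z^o$.

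\emph{Step 4 ($K_z^o=G_z^o\cap K$).} The inclusion $K_z^o\subseteq G_z^o\cap K$ is clear. The group $C:=G_z^o\cap K$ is compact, so $C\subseteq gK_z^og^{-1}$ for some $g\in G_z^o$. Then $K_z^o\subseteq C\subseteq gK_z^og^{-1}$, where the outer two groups are connected and of the same dimension. Hence they coincide, and $C=K_z^o$.
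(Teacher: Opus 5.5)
Your argument is correct and has the same skeleton as the paper's proof. $G_z^o$ is split into the connected subgroup $G_0$ with Lie algebra $\g{g}_0=\g{k}_z\oplus\g{p}_z$ (your $H^o$) and the closed, simply connected, nilpotent normal subgroup $E$ with Lie algebra $\bigoplus_{\lambda>0}\g{g}_\lambda$ (your $N$). Then one shows $G_0=K_z^o\exp(\g{p}_z)$ and $E\cap G_0=\{1\}$. Several ingredients are justified differently, though:
\begin{enumerate}
\item \emph{Closedness and simple connectivity of $E$.} You use that $\beta$ is a Hermitian matrix, so $\g{n}$ consists of nilpotent matrices and $\exp(\g{n})$ is unipotent in $\mathrm{GL}(n,\C)$. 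The paper instead works with $\Ad_{G_z^o}(E)$ and the center of $G_z^o$, whose Lie algebra lies in $\g{g}_0$. Your route is shorter, but it relies on the linear embedding $U^\C\subseteq\mathrm{GL}(n,\C)$.
\item \emph{$E\cap G_0=\{1\}$.} You contract $N$ by conjugation with $\exp(-t\beta)$. Note this happens as $t\to+\infty$, not $-\infty$, since $\Ad_{\exp(-t\beta)}Y=\sum_\lambda e^{-t\lambda}Y_\lambda$. The paper uses injectivity of $\exp|_{\g{e}}$ and the fact that $e^{\ad_\beta}$ has eigenvalues $>1$ on $\g{e}$.
\item \emph{The final maximality step.} This is where the two proofs really diverge. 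You make $G_z^o\cong H^o\ltimes N$ explicit, deduce that $G_z^o$ deformation retracts onto $K_z^o$, and compare dimensions of maximal compact subgroups via the Cartan--Iwasawa--Malcev theorem and a homological count. The paper avoids homotopy theory. It projects a compact connected $C\supseteq K_z^o$ to $G_z^o/E\cong G_0$ and uses maximality of $K_z^o$ in $G_0$ to get $\g{c}\subseteq\g{k}_z+\g{e}$. It then excludes $\g{c}\cap\g{e}\neq 0$ because $E$ has no nontrivial compact subgroups.
\end{enumerate}
Your version is more conceptual; the paper's is more elementary.

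Two small repairs. First, in Step 2 you do not supply the ``care'' you mention, and it needs strict convexity once more. Since $\nabla f$ is injective and $\Ad_K$-equivariant, $k\in K^\beta$ forces $\Ad_k\zeta=\zeta$, where $\zeta=\mu_\g{p}(z)$. Hence $\mu_\g{p}(\exp(X).z)=\mu_\g{p}(k^{-1}.z)=\zeta$. So $t\mapsto\mu^X_\g{p}(\exp(tX).z)$ takes the same value at $t=0$ and $t=1$, and monotonicity gives $X_Z(z)=0$.

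Alternatively, you can drop the compatibility of $G_z\cap G^\beta$ altogether, as the paper does. The set $K_z^o\exp(\g{p}_z)$ is closed in $G$, lies in $G_0$, and has dimension $\dim\g{g}_0$. It is therefore open and closed in the connected group $G_0$, hence equal to it.

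Second, the ``main obstacle'' you flag in Step 3 is harmless. Multiplication $H^o\ltimes N\to G_z^o$ is a bijective Lie group homomorphism with bijective differential, hence an isomorphism. No separate closedness check is needed.
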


\begin{proof} 
By Theorem \ref{mainthm} we have the decomposition 
$$\g{g}_z = \g{k}_z \oplus \g{p}_z \oplus\bigoplus_{\lambda >0}\g{g}_\lambda$$
as eigenspaces of the adjoint map $\ad_{{(\nabla f)}_{\mu_{\g{p}}(z)}}$, where 
$$\g{g}_0 = \g{k}_z+\g{p}_z,\quad \g{e} = \bigoplus_{\lambda >0}\g{g}_\lambda\ .$$
We denote by $G_0$ and $E$ the connected subgroups of $G_z^o$ with Lie algebra $\g{g}_0$ and $\g{e}$ respectively.
\par\medskip
\noindent {\bf Claim 1.}\ $E$ is closed and simply connected.\par\medskip
\noindent We consider the linear subgroup {$\tilde E:= {\rm{Ad}}_{G_z^o}(E)\subseteq {\rm{Ad}}_{G_z^o}(G_z^o)$}. As $E$ is nilpotent and normal, every endomorphism $\ad_v$, $v\in \g{e}$, is nilpotent and the group $\bar E$ is unipotent. We know that a linear unipotent subgroup is closed and diffeomorphic to an euclidean space (as the exponential map is a diffeomorphism when restricted to the set of upper triangular matrices with all zeros on the diagonal). Therefore ${\rm{Ad}}_{G_z^o}^{-1}(\tilde E)$ is a closed subgroup given by $E\cdot Z$, where $Z$ is the center of $G_z^o$. Now we note that the Lie algebra $\g{z}$ of $Z$ is contained in $\g{g}_0$: indeed $[\g{z},{(\nabla f)}_{\mu_{\g{p}}(z)}]=0$ implies that $\g{z}\subseteq \g{g}_0$. Therefore $E\cap Z$ is discrete and ${\rm{Ad}}_{G_z^o}:E\to \tilde E$ is a covering. As $\tilde E$ is simply connected, we see that $E$ is simply connected and $Z\cap E=\{1\}$. It then follows that {$(E\cdot Z)^o\cong E\times Z^o$} and therefore $E$ is closed {in $(E\cdot Z)^o$, hence in $G_z^o$ and in $G$.}\par\medskip
\noindent {\bf Claim 2.}\ A maximal compact subgroup of $G_0$ is conjugate to $K_z^o$.\par\medskip
We know that $G=K\cdot\exp(\g{p})$ and moreover $K_z^o\cdot \exp(\g{p}_z)\subseteq G_0$ is a closed submanifold of dimension $\dim \g{g}_0$, hence it is open on $G_0$. By connectedness of $G_0$, we have $G_0=K_z^o\cdot \exp(\g{p}_z)$. Let $K'$ be a maximal compact subgroup of $G_0$ containing $K_z^o$. Now $K'$ is connected and if $K'$ is strictly bigger than $K_z^o$, then there exists a non-zero $v\in \g{k}'\cap \g{p}_z$. Then the closure $W$ of $\exp(\mathbb R\cdot v)$ in $G_z^o$ is compact and contained in the closed submanifold $\exp(\g{p}_z)$. On the other hand, $\exp:\g{p} \to \exp(\g{p})$ is a diffeomorphism and therefore $(\exp|_{\g{p}_z})^{-1}(W)$ is compact, while it contains the full line $\mathbb R\cdot v$, a contradiction. Therefore $\g{k}'=\g{k}_z$ and $K'=K_z^o$.\par\medskip
\noindent {\bf Claim 3.}\ $E\cap G_0=\{1\}$.\par\medskip
Given $g\in E\cap G_0$, we write $g=\exp(v)$ for some $v\in\g{e}$. Let $\zeta:=\mu_{\g{p}}(z)$. If $u=\exp\left({(\nabla f)}_{\zeta}\right)$, then $u$ centralizes $G_0$, hence
$$ \exp({\rm{Ad}}_uv) =\exp\left(\left(e^{\ad_{{(\nabla f)}_{\zeta}}}\right)v\right)= \exp(v)\ .$$
As $\exp|_{\g{e}}$ is injective and $e^{\ad_{{(\nabla f)}_{\zeta}}}$ has eigenvalues strictly bigger than $1$, we see that $v=0$ and our claim follows.

Now let $C$ be a compact connected subgroup of $G_z^o$ containing $K_z^o$. As $E$ is closed and normal, we consider the projection $\pi:G_z^o\to G_z^o/E := \bar G$. Then $\mathrm{Lie(\bar G)}\cong\g{g}_0$ and the restriction $\pi|_{G_0}:G_0\to \bar G$ is an isomorphism as $\pi_*|_{\g{g_0}}$ is an isomorphism and $G_0\cap E=\{1\}$. Now $\pi(C)$ is compact and therefore
$(\pi|_{G_0})^{-1}(\pi(C))$ is compact as well and it contains $K_z^o$ (as $C\supseteq K_z^o$
implies $\pi(C)\supseteq \pi(K_z^o)=(\pi|_{G_0}(K_z^o)$). From Claim 2 we have that $(\pi|_{G_o})^{-1}(\pi(C))= K_z^o$, hence 
$${\rm{Lie}}((\pi|_{G_o})^{-1}(\pi(C))) = (\pi|_{G_o})^{-1}_*({\rm{Lie}}(\pi(C)) = \g{k}_z$$
and therefore,  
$$\pi_*\g{c} = {\rm{Lie}}(\pi(C)) = (\pi|_{G_o})_*\g{k}_z= \pi_*\g{k}_z\ ,$$
as $\g{k}_z\subset\g{g}_o$.
This means that $\g{c} \equiv \g{k}_z\ ({\rm{mod}}\ \g{e})$. Now, if we suppose that $\g{k}_z\subsetneq \g{c}$, then there exists a non-zero $v\in \g{c}\cap \g{e}$ and the closure of
$\{\exp(tv)|\ t\in\mathbb R\}$ is a compact subset contained in the closed $E$. On the other hand $E$ is a simply connected nilpotent group and does not contain non-trivial compact subgroups. Therefore $\g{c}=\g{k}_z$, hence $C=K_z^o$. If now $C'$ is a maximal compact subgroup containing $G_z^o\cap K\supseteq K_z^o$, then $C'$ is conjugate to $K_z^o$ and therefore $\dim \g{c}'=\dim \g{k}_z$. As $C'$ is connected, we have $C'=K_z^o$ and therefore
$$K_z^o=G_z^o\cap K$$
is a maximal compact subgroup.
\end{proof}

\begin{rem}\label{R:Salamon}
    We would like to briefly dwell on the fact that if we restrict ourselves to the case $G=U^\C$, the proof of Theorem \ref{mainthm2} is substantially simplified. The reason for this simplification is due to the following fact, which is used in the proof of Theorem 13.5 in \cite{GRS24}. Let $\g{c}$ be the Lie algebra of a maximal compact subgroup of $G_z^o=(U^\C_z)^o$ containing $U_z^o=K^o_z$. Let $\pi:\g{c}\to \g{u}_z$ be the orthogonal projection and consider the splittings $\g{c}=Z(\g{c})\oplus [\g{c},\g{c}]$, $\g{u}_z=Z(\g{u}_z)\oplus [\g{u}_z,\g{u}_z]$ of the Lie algebras $\g{c},\g{u}_z$ into their respective centers and semisimple parts.
Then, one has $$\pi(Z(\g{c}))= Z(\g{u}_z)\quad \text{and}\quad \pi([\g{c},\g{c}])= [\g{u}_z,\g{u}_z]\ .$$ The proof of this fact can be divided into three steps.
\begin{enumerate}
    \item[(a)] First, we prove that $\pi(Z(\g{c}))\subseteq Z(\g{u}_z)$. Let $\xi\in Z(\g{c})$ and decompose it as $\xi=\pi(\xi)+\xi^\bot$, $\xi^\bot\in\g{u}_z^\bot$. Then given any $\alpha\in\g{u}_z$, for all $\beta\in\g{u}_z$ we have $$\ssp{[\pi(\xi),\alpha]}{\beta}=\ssp{[\xi-\xi^\bot,\alpha]}{\beta}=-\ssp{\xi^\bot}{[\alpha,\beta]}=0\ ,$$ meaning that $[\pi(\xi),\alpha]=0$, and we conclude that $\pi(\xi)\in Z(\g{u}_z)$; \\

    \item[(b)]  We now show that $\pi([\g{c},\g{c}]))\subseteq [\g{u}_z,\g{u}_z]$. Let $\xi_1,\xi_2\in\g{c}$ and let $\xi_j=\zeta_j+i\eta_j+\varepsilon_j$, for $j=1,2$ be the decomposition of $\xi_j$ with respect to the splitting (\ref{Eq1:CMdec}), i.e., $\zeta_j,\eta_j\in \g{u}_z$ and $\varepsilon_j\in\g{e} = \bigoplus_{\lambda<0}\g{u}^\C_{z,\lambda}$. Then $\pi([\xi_1,\xi_2])=[\zeta_1,\zeta_2]-[\eta_1,\eta_2]\in[\g{u}_z,\g{u}_z]$, because $[\g{u}_z,i\g{u}_z]\subseteq i\g{u}_z$, $[\g{e},\g{e}]\subseteq\g{e}$ and $[\g{u}_z,\g{e}]\subseteq \g{e}$;\\
    \item[(c)] By item (a) and (b), and since the projection is surjective, it must be $\pi(Z(\g{c}))= Z(\g{u}_z)$ and $\pi([\g{c},\g{c}])= [\g{u}_z,\g{u}_z]$.
\end{enumerate}

We observe that while item (a) holds also for general $\g{k}_z\subset \g{u}_z$, item (b) fails. Indeed, by repeating the same arguments, using decomposition (\ref{Eq:CMdec2}), we must deal with the fact that, in general, $[\g{p}_z,\g{p}_z]\not\subseteq [\g{k}_z,\g{k}_z]$, but only $[\g{p}_z,\g{p}_z]\subseteq \g{k}_z$.
\end{rem}

\end{document}